\documentclass[hidelinks,onefignum,onetabnum,onealgnum,onethmnum,oneeqnum]{siamart251216}

\usepackage{siunitx}
\protected\def\0{0}
\usepackage{amsfonts}
\usepackage{mathtools}
\usepackage{autonum}
\usepackage{graphicx}
\usepackage{textcomp}
\usepackage{bm}
\usepackage{xcolor}
\usepackage{xspace}
\usepackage[sort]{cite}
\usepackage{pdflscape}
\usepackage{booktabs}
\usepackage{tabularx}
\usepackage{multirow}
\usepackage{xltabular}
\usepackage{longtable}
\usepackage{threeparttable}
\usepackage{rotating}
\usepackage{etoolbox}
\usepackage{float}
\usepackage{algpseudocode}
\usepackage{xurl}
\usepackage{setspace}
\usepackage{placeins}
\usepackage{optidef}
\usepackage{pgfplots}
\pgfplotsset{compat=1.14}
\usepackage{pgfplotstable}
\usepackage{subcaption}
\usepgfplotslibrary{groupplots}
\usetikzlibrary{calc}

\definecolor{lightgray}{gray}{0.9}
\definecolor{darkergray}{gray}{0.8}
\definecolor{forestgreen}{RGB}{34,139,34}


\newcommand{\nnz}{\textrm{nnz}}

\makeatletter
\@cref@capitalisefalse
\crefname{ALG@line}{line}{lines}
\Crefname{ALG@line}{Line}{Lines}
\g@addto@macro\ALG@step{%
  \addtocounter{ALG@line}{-1}%
  \refstepcounter{ALG@line}%
  \expandafter\@cref@getprefix\cref@currentlabel\@nil\cref@currentprefix%
  \xdef\cref@currentprefix{\cref@currentprefix}}%
\g@addto@macro\ALG@beginalgorithmic{%
  \def\cref@currentlabel{%
    [line][\arabic{ALG@line}][\cref@currentprefix]\theALG@line}}%
\makeatother

\crefname{appendix}{the appendix}{appendices}
\crefformat{appendix}{#2the appendix#3}

\pgfplotsset{
    colormap={time_per_it_ratio_colors}{
        rgb255(-0.02)=(122, 35, 0)
        rgb255(0.2279)=(226, 209, 255) 
        rgb255(0.82)=(0, 51, 110)
    },
    colormap={it_ratio_colors}{
        rgb255(0)=(122, 35, 0)
        rgb255(0.5)=(226, 209, 255)
        rgb255(0.999999)=(0, 51, 110)
        rgb255(1)=(0, 150, 40)
        rgb255(1.16)=(0, 230, 0)
    },
    colormap={speedup_colors}{
        rgb255(-0.783)=(0, 51, 110)
        rgb255(-0.3825)=(129, 148, 235)
        rgb255(0)=(230, 230, 255)
        rgb255(0.000001)=(255, 230, 230)
        rgb255(0.3915)=(240, 105, 100)
        rgb255(0.783)=(122, 35, 0)
    }
}

\headers{Multi-Basis Parallel Conjugate Gradient}{Andrew Xu and Shaked Regev}

\ifpdf
\hypersetup{
  pdftitle={Accelerating the Conjugate Gradient Method by Solving Multiple GPU-Parallelized Duplicate Systems},
  pdfauthor={Andrew Xu and Shaked Regev}
}
\fi

\begin{document}

\title{
	Accelerating the Conjugate Gradient Method by Solving Multiple GPU-Parallelized Duplicate Systems
	\thanks{Notice: This manuscript has been authored by UT-Battelle, LLC, under contract DE-AC05-00OR22725 with the US Department of Energy (DOE). The US government retains and the publisher, by accepting the article for publication, acknowledges that the US government retains a nonexclusive, paid-up, irrevocable, worldwide license to publish or reproduce the published form of this manuscript, or allow others to do so, for US government purposes. DOE will provide public access to these results of federally sponsored research in accordance with the DOE Public Access Plan (\href{https://www.energy.gov/doe-public-access-plan}{https://www.energy.gov/doe-public-access-plan}).}
}

\author{
	Andrew Xu\textsuperscript{$\dagger\ddagger\S$} and
	Shaked Regev\textsuperscript{$\dagger\S$}
}
\maketitle
\renewcommand{\thefootnote}{}
\footnotetext{\textsuperscript{$\dagger$}~Oak Ridge National Laboratory, Oak Ridge, TN 37831, USA.}
\footnotetext{\textsuperscript{$\ddagger$}~Duke University, Durham, NC 27707, USA.}
\footnotetext{\textsuperscript{$\S$}~Equal contribution.}
\renewcommand{\thefootnote}{\arabic{footnote}}
\begin{abstract}
	We propose a method to accelerate the conjugate gradient method (\textbf{CG}) on parallel computing hardware by performing block conjugate gradient on one linear system. We aim to reduce the number of iterations until convergence by solving several copies of the system to explore multiple regions of the solution space at once. Although solving multiple systems requires more floating point operations than solving one, these computations are highly parallelizable, even without additional hardware resources. Our method does not replace preconditioning and can be used with any preconditioner. We developed linear regression models to predict iteration reduction and speedup as functions of the coefficient matrix's size, its number of nonzeros, and the solve time with CG. Our experiments show that our method can reduce solve time by up to 6 times in dense systems and up to 5 times in sparse systems that require long solve times with CG relative to their size. 
\end{abstract}

\begin{keywords}
	conjugate gradient, parallel computing, iterative methods
\end{keywords}

\begin{MSCcodes}
	65F10
\end{MSCcodes}

\section{Introduction}
Iterative methods are used to solve large linear systems when direct methods are computationally expensive. This cost is a function of the sparsity structure of the system~\cite{Trefethen, Hausner2024}. ``Block" versions of iterative methods solve multiple systems with the same coefficient matrix. They often require fewer iterations to converge than solving each system sequentially would, because each system uses information from other systems to accelerate its convergence. These block methods are highly parallelizable with graphics processing units (\textbf{GPUs}), and the per-iteration solve time can grow sublinearly with respect to the number of systems solved at once. Thus, we seek to investigate the possibility of  solving copies of one system using block iterative methods, taking advantage of potential iteration reductions and slow-growing per-iteration solve times.

Here we apply this idea to conjugate gradient (\textbf{CG})~\cite{CG} on the system $Ax = b$ as a proof of concept. $A$ is an $n \times n$ symmetric positive definite (\textbf{SPD}) matrix, $b$ is a known vector, and $x$ is an unknown vector. We propose the Multi-Basis Parallel Conjugate Gradient (\textbf{MBPCG}), which solves multiple copies of a system using a variant of block conjugate gradient (\textbf{BCG}). Each copy has a different randomized initial guess. \cref{tab:notation} defines our notation and terminology.
\begin{table}
	\caption{Notation and terminology.}
	\label{tab:notation}
	\centering
	\begin{threeparttable}
		\renewcommand{\arraystretch}{1.3}
		\begin{tabularx}{\linewidth}{lX}
			\toprule
			Symbol                                                & Meaning                                                                                         \\
			\midrule
			nnz                                                   & number of nonzero entries in $A$. In dense matrices, $\nnz=n^2$                                 \\
			$k$                                                   & number of systems solved at once                                                                \\
			$A$ and $M$                                           & $n \times n$ matrix                                                                             \\
			any other capital Latin letter                        & $n \times k$ matrix                                                                             \\
			$h$, $i$, and $j$                                     & integer indices                                                                                 \\
			any other lower-case Latin letter in \cref{alg:mbpcg} & length-$n$ vector                                                                               \\
			Greek letter in \cref{alg:mbpcg}                      & $k \times k$ matrix                                                                             \\
			\~{} above symbol                                     & relating to the initial residual system                                                         \\[1ex]
			``speedup''                                           & $\dfrac{\text{CG time elapsed until convergence}}{\text{MBPCG time elapsed until convergence}}$ \\[1ex]
			``iteration count''                                   & iterations elapsed until convergence                                                            \\[1ex]
			``iteration ratio''                                   & $\dfrac{\text{MBPCG iteration count}}{\text{CG iteration count}}$                               \\[2ex]
			``per-iteration time ratio"                           & $\dfrac{\text{MBPCG time per iteration}}{\text{CG time per iteration}}$\tnote{a}                \\[1ex]
			superscript                                           & iteration count                                                                                 \\
			subscript next to a matrix                            & column vector                                                                                   \\
			\bottomrule
		\end{tabularx}
		\begin{tablenotes}
			\small
			\item[a] ``Time per iteration" is not simply $\frac{\text{total solve time}}{\text{iteration count}}$. It includes only \crefrange{line:Xi}{line:initial_tolerance} and \crefrange{line:W_zeta}{line:Sigma} of \cref{alg:mbpcg}. It is the average across all iterations except the last, as the last iteration executes \crefrange{line:hat_B}{line:initial_tolerance_endif} instead of \crefrange{line:W_zeta}{line:Sigma}.
		\end{tablenotes}
	\end{threeparttable}
\end{table}

Each iteration of CG consists of one sparse matrix-vector multiplication (\textbf{SpMV}), several vector operations such as scaling and addition, and several scalar operations. The inputs to each iteration depend on the outputs of the previous, and if computed on a GPU, only SpMV utilizes a significant portion of GPU resources. We thus consider a variant of CG proposed by Chronopoulos and Gear~\cite{ChronopoulosGear1989}, which has several independent and parallelizable steps. It is equivalent to standard CG in exact arithmetic, but sacrifices some numerical stability~\cite{Carson2018}. It accepts an initial guess vector $x^0$, which can be tuned to reduce iteration count.

Although the Chronopoulos-Gear variant is more parallelizable than standard CG, we hypothesize that there is further room to improve this using BCG. For example, CG's SpMV step reads $A$ from memory once per output vector. If we reuse $A$ to perform $k$ SpMV operations in parallel, each system only needs $1/k$ reads. The solve time per system thus grows sublinearly with $k$. Performing multiple parallel SpMV operations is equivalent to performing one sparse matrix-dense matrix multiplication (\textbf{SpMM}). More importantly, with CG, the iteration count can be high for large and relatively dense matrices. We hypothesize that our method can significantly reduce the iteration count, enough to offset the per-iteration time penalty.

The block conjugate gradient method is used to solve multiple systems that share a coefficient matrix $A$. Chronopoulos-Gear CG can be adopted into a BCG algorithm by replacing all vectors with $n \times k$ matrices and all scalars with $k \times k$ matrices. The equivalent system takes the form $AX=B$.

CG is often performed with a preconditioner to reduce iteration count. However, some linear systems require many iterations regardless of preconditioner choice~\cite{tunnell2025empiricalstudyconjugategradient}, leaving room for improvement. Since preconditioning is not our focus, we diagonally scale $A$ in lieu of preconditioning. Note that our method does not replace preconditioning and can be used with any preconditioner. We define the diagonal matrix $D$, where $\vphantom{\Big(}D_{jj}=\sqrt{A_{jj}}$. We then produce a diagonally-scaled system $\bar A \bar X = \bar B$, where $\bar A=D^{-1} A D^{-T}$, $\bar X = DX$, and $\bar B = D^{-1}B$. $X$ can then be recovered easily after solving this system. The residuals of the original system at the $i$th iteration, $R^i$, can be recovered from the residuals of the diagonally-scaled system, $\bar R^i$, by computing $R^i = D\bar R^i$. From here on, we will omit these steps and let $A$, $B$, $X^i$, and $R^i$ describe the diagonally-scaled system.

\section{The Multi-Basis Parallel Conjugate Gradient method}
Using a BCG-like approach to solve one linear system can reduce the iterations elapsed until convergence, as BCG explores multiple directions in each system's Krylov space at each iteration~\cite{JiLi2017BreakdownFree, JiLi2017Lstsq}. Each system utilizes residual information from all other systems to improve its approximation. Since BCG replaces vector operations with matrix operations, GPU resource usage per system is more efficient. 

For iteration $i$ and column index $j$, as $X^i_j$ converges toward the true value of $X$, $X^i$ becomes increasingly ill-conditioned. This is known as rank collapse and produces extremely small or extremely large entries in various intermediate matrices. This frequently causes floating-point overflow or high numerical errors in BCG, causing failure to converge~\cite{Tichy2025}. Thus, we adopt the ``Dubrulle-R BCG" described in Algorithm 5 of \cite{Tichy2025}, which maintains linear independence of the columns of each $n\times k$ matrix by performing QR decomposition in each iteration.

\cref{alg:mbpcg} describes our proposed MBPCG algorithm, which solves copies of one system using Dubrulle-R BCG. This idea can be adapted for other variants of BCG, as long as there is an orthogonalization step to prevent rank collapse. Our algorithm is compatible with standard preconditioning techniques through modifications resembling those in Algorithm 7 of~\cite{Tichy2025}.

\begin{algorithm}[htbp]
	\caption{Multi-Basis Parallel CG}
	\label{alg:mbpcg}
	\begin{algorithmic}[1]
		\setstretch{1.1}
		\State \textbf{Input} $A$, $b$ \label{line:input}
		\State $B = \begin{bmatrix}
		\vert & & \vert\\
		b & ... & b\\
		\vert&&\vert
		\end{bmatrix}$ \label{line:B}
		\State Randomly generate $U$ \label{line:U}
		\State $\vphantom{\Big(} X^0 = \frac{\| B\|_F}{\| A U\|_F}U$ \label{line:X^0} 
		\State $\tilde B = B - A X^0$ \label{line:tilde_B}
		\State $\tilde X^0 = \mathbf{O}$ \label{line:tilde_X^0}
		\State $R^0 = \tilde B$ \label{line:R^0}
		\State $W^0, \Sigma^0 = \text{qr}(R^0)$ \label{line:W^0_Sigma^0}
		\State $P^0 = W^0$ \label{line:P^0}
		\For{$i=1, 2, ...$} \label{line:loop}
		\State $\Xi^{i-1} = \left(P^{i-1, T} A P^{i-1}\right)^{-1}$ \label{line:Xi}
		\State $\tilde X^i = \tilde X^{i-1} + P^{i-1}\Xi^{i-1}\Sigma^{i-1}$ \label{line:tilde_X}
		\State $R^i = R^{i-1} - AP^{i-1}\Xi^{i-1}\Sigma^{i-1}$ \label{line:R}
		\State $\vphantom{\Big(} h = \arg\!\min_j \frac{\|R_j^i\|}{\|b\|}$ \label{line:h}
		\If {$\vphantom{\Big(} \frac{\|R_h^i\|}{\|b\|} < \text{initial tolerance}$} \label{line:initial_tolerance}
		\State $\hat B^i = B - R^i$ \label{line:hat_B}
		\State $\hat c^i=\left(\hat B^{i, T}\hat B^i\right)^{-1}(\hat B^{i, T})b$ \label{line:hat_c}
		\State $r^i = b - \hat B^i \hat c^i$ \label{line:r}
		\If {$\vphantom{\Big(}\frac{\|r^i\|}{\|b\|} < \text{convergence tolerance}$} \label{line:convergence_tolerance}
		\State exit \label{line:exit}
		\EndIf \label{line:convergence_tolerance_endif}
		\EndIf \label{line:initial_tolerance_endif}
		\State $W^i, \zeta^i = \text{qr}\left(W^{i-1}- AP^{i-1}\Xi^{i-1}\right)$ \label{line:W_zeta}
		\State $P^i = W^i + P^{i-1}\zeta^{i, T}$ \label{line:P}
		\State $\Sigma^i = \zeta^i\Sigma^{i-1}$ \label{line:Sigma}
		\EndFor \label{line:end_for}
		\State $X^i = \tilde X^i + X^0$ \label{line:X}
		\State $x^i = X^i \hat c^i$ \label{line:x}
		\State \textbf{Return} $x^i$ \label{line:return}
	\end{algorithmic}
\end{algorithm}

To improve numerical stability, we transform the original system into the ``initial residual system" $A \tilde X = \tilde B$, where $\tilde B := B - A X^0$ represents the residual from the randomized initial guess $X^0$. It follows that
\begin{equation}
	X = \tilde X + X^0. \label{eq:X_decomp}
\end{equation}
Thus, at the $i$th iteration, $X^i=\tilde X^i + X^0$. Note that the residual of the initial residual system is identical to $R^i$, meaning we do not need to explicitly recover $R^i$:
\begin{equation}
	R^i = B - A X^i = \left(\tilde B + A X^0\right) - \left(A \left(\tilde X^i + X^0\right)\right) = \tilde B - A \tilde X^i.                  
\end{equation}

The entries of $U$ are independent and identically distributed (i.i.d.) random variables generated with the distribution $\text{Uniform}(-1, 1)$. $U$ is then normalized to produce $X^0$ such that $\|AX^0\|_F = \|B\|_F$, ensuring the entries of $X^0$ land in a reasonable range.

When the relative error of the best basis, $\frac{\|R_h^i\|}{\|b\|}$, reaches an initial user-defined tolerance value, we compute the best linear combination of the columns of $X^i$ via least-squares on the system $AX^i c^i=b$. The least-squared solution is 
\begin{equation}
	\hat c^i := 
	\left(\left(AX^i\right)^T\left(AX^i\right)\right)^{-1}\left(AX^i\right)b.
\end{equation}

Now we show that $\hat B^i :=A X^i = B-R^i$, meaning we do not need to perform an expensive SpMM operation to find $\hat c^i$.

\begin{proposition}
	$AX^i = B-R^i$.
\end{proposition}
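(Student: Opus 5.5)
The plan is an induction on $i$ showing that the recursively updated $R^i$ of \cref{alg:mbpcg} equals the true residual $\tilde B - A\tilde X^i$ of the initial residual system. Once that holds, the identity computed after \cref{eq:X_decomp} gives $R^i = B - AX^i$, which rearranges to $AX^i = B - R^i$. Everything is taken in exact arithmetic, since the recursion for $R^i$ is what the algorithm actually stores, and the proposition is an algebraic identity about it.

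\emph{Base case.} From \cref{line:tilde_X^0} and \cref{line:R^0} we have $\tilde X^0=\mathbf{O}$ and $R^0=\tilde B$, so $R^0 = \tilde B - A\tilde X^0$ holds trivially.

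\emph{Inductive step.} Suppose $R^{i-1} = \tilde B - A\tilde X^{i-1}$. Write $\Delta^{i-1} := P^{i-1}\Xi^{i-1}\Sigma^{i-1}$. Then \cref{line:tilde_X} reads $\tilde X^i = \tilde X^{i-1} + \Delta^{i-1}$, and \cref{line:R} reads $R^i = R^{i-1} - A\Delta^{i-1}$, using associativity $AP^{i-1}\Xi^{i-1}\Sigma^{i-1} = A(P^{i-1}\Xi^{i-1}\Sigma^{i-1})$. Hence
\[
R^i = \tilde B - A\tilde X^{i-1} - A\Delta^{i-1} = \tilde B - A\tilde X^i .
\]
The QR updates in \cref{line:W_zeta,line:P,line:Sigma} only produce the next $P^i$ and $\Sigma^i$. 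They never break the matching between the updates of $\tilde X$ and $R$, so they need no analysis here. The one assumption is that $\Xi^{i-1}$ is well defined, i.e.\ $P^{i-1,T}AP^{i-1}$ is invertible, which the algorithm already presumes.

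\emph{Conclusion.} Using $X^i=\tilde X^i+X^0$ and $\tilde B = B - AX^0$ (\cref{line:tilde_B}),
\[
AX^i = A\tilde X^i + AX^0 = (\tilde B - R^i) + (B - \tilde B) = B - R^i .
\]
There is no real obstacle. The only point needing care is that both recursions use the same increment $\Delta^{i-1}$, and this is immediate from the two lines of the algorithm. In floating point the identity becomes approximate (the usual gap between recursive and true residuals), but the statement concerns exact arithmetic.
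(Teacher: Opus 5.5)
Your proof is correct and is essentially the paper's argument. The paper multiplies the $\tilde X$ update by $A$ and adds it to the $R$ update to obtain the invariant $A\tilde X^i + R^i = A\tilde X^{i-1} + R^{i-1} = \cdots = \tilde B$, which is exactly your inductive claim $R^i = \tilde B - A\tilde X^i$, and it then substitutes $\tilde X^i = X^i - X^0$ and $\tilde B = B - AX^0$ just as you do.
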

\begin{proof}
	By definition,
	\begin{align}
		\tilde{X}^i & = \tilde X^{i-1} + P^{i-1}\Xi^{i-1}\Sigma^{i-1} \label{eq:X_tilde} \\
		R^i         & = R^{i-1} - AP^{i-1}\Xi^{i-1}\Sigma^{i-1} \label{eq:R}.            
	\end{align}
	Multiplying both sides of \cref{eq:X_tilde} by $A$ and adding \cref{eq:X_tilde,eq:R} yields
	\begin{equation}
		A \tilde X^i + R^i = A \tilde X^{i-1} + R^{i-1}.\label{eq:prop_1_induction}
	\end{equation}
	Using the initial condition $\tilde X^0 = \mathbf{O}, R^0 = \tilde B$, we obtain
	\begin{align}
		A \tilde X^1 + R^{1}           & = \tilde B                                                                            \\
		A \tilde X^i+R^i               & = \tilde B \text{ by  \cref{eq:prop_1_induction} }                                    \\
		A\left(X^i - X^0 \right) + R^i & = B - A X^0 \text{ by \cref{eq:X_decomp} and \cref{line:tilde_B} of \cref{alg:mbpcg}} \\
		A X^i                          & = B - R^i                                                                             
	\end{align}
\end{proof}
When $\frac{\|r^i\|}{\|b\|}$ reaches a second user-defined tolerance value (``convergence tolerance"), the algorithm returns the final estimate of $x$. The two-tolerance approach ensures that the expensive least-squares step only runs when there is a sufficient possibility that the best linear combination's error falls below the convergence threshold. However, the least-squares step frequently resulted in numerical overflow or instability, as the operation $\vphantom{\Big(} \hat B^{i, T}\hat B^{i}$ can produce very large or very small values. Thus, our implementation falls back to using the relative error of the best linear combination if numerical issues arise and sets the two tolerance values to be equal. This way, the least squares step only runs once at the end.

\section{Implementation and performance analysis}
We implemented all iterative steps except SpMM with custom CUDA kernels, as they outperform vendor-provided libraries such as cuSolver and cuBLAS. All sparse matrices are stored in compressed sparse row format, and all dense matrices are stored column-major. Our implementation uses double-precision real floating-point numbers.

\cref{tab:runtime-by-step} shows that for sparse systems, the SpMM operation $AP^i$ is far more computationally expensive than all other iterative steps. We used an implementation of SpMM from~\cite{hypre}, which performs $k$ independent SpMV operations in parallel. There is potential to further optimize MBPCG by choosing faster SpMM algorithms that minimally increase execution time as $k$ increases. Row-major storage may also improve SpMM performance, as it would coalesce memory access across the rows of $P^i$, and uncoalesced memory access is a major cost in this SpMM implementation. For dense systems, we used cuSPARSE~\cite{cuSPARSE} for the dense matrix multiplication $AP^i$.

\begin{table}
	\centering
	\begin{threeparttable}
		\caption{Execution time of each MBPCG kernel for sparse system with matrix \texttt{Fault\_639}. Line numbers refer to \cref{alg:mbpcg}.\tnote{a} Some kernels correspond to multiple lines.}
		\label{tab:runtime-by-step}
		\begin{tabular}{lcrrrr}
			\toprule
			& & \multicolumn{4}{c}{Time (ms)} \\
			\cmidrule(lr){3-6}
			Variables computed                       & Lines                                      & $k=1$  & $k=2$  & $k=4$  & $k=8$  \\
			\midrule
			$A P^{i-1}$                              & \ref{line:Xi}                              & 0.5226 & 0.5821 & 0.8812 & 1.4880 \\
			$P^{i-1, T} \left(AP^{i-1}\right)$       & \ref{line:Xi}                              & 0.1363 & 0.1588 & 0.2047 & 0.7934 \\
			$P^{i-1} \Xi^{-1}$, $\tilde{X}^i$, $R^i$ & \ref{line:X}, \ref{line:R}                 & 0.0437 & 0.0763 & 0.1475 & 0.2853 \\
			$h$, $\|R^i_h\|$                         & \ref{line:h}, \ref{line:initial_tolerance} & 0.0361 & 0.0456 & 0.0572 & 0.0813 \\[0.15em]
			$W^{i-1}- AP^{i-1}\Xi^{i-1}$             & \ref{line:W_zeta}                          & 0.0365 & 0.0624 & 0.1302 & 0.4432 \\
			$W^i$, $\zeta^i$                         & \ref{line:W_zeta}                          & 0.0584 & 0.0760 & 0.1290 & 0.2579 \\
			$P^i$, $\Sigma^i$                        & \ref{line:P}, \ref{line:Sigma}             & 0.0306 & 0.0479 & 0.0862 & 0.1653 \\
			\bottomrule
		\end{tabular}
		\begin{tablenotes}
			\small
			\item[a] Timing each step separately requires GPU synchronizations and prevents overlapped kernel execution. Therefore, the sum of each step's solve times can exceed the fully optimized per-iteration solve times listed in \cref{sec:Appendix}.
		\end{tablenotes}
	\end{threeparttable}
\end{table}

We implemented multiplication by $\Xi^{i-1}$ using a Cholesky factorization of $\Xi^{i-1, -1}$, which is SPD. This is faster and more numerically stable than explicitly inverting $P^{i-1, T} A P^{i-1}$~\cite{Kushida2015,Trefethen}.

QR factorization of $n\times k$ matrices at \cref{line:W^0_Sigma^0,line:W_zeta} is computed using Cholesky QR as described in Figure 2 of~\cite{Fukaya2014}. Compared to methods such as Householder reflections and Givens rotations, Cholesky QR is more performant especially as $n \gg k$, but it is less numerically accurate~\cite{Fukaya2014}. Since our algorithm only requires linear independence of the columns, not strict orthogonality, the reduced accuracy is acceptable.

\section{Experimental method}
To determine the effectiveness of MBPCG, we compared iteration count and solve time between MBPCG and Chronopoulos-Gear CG in both sparse and dense linear systems. CG will hereafter refer to the Chronopoulos-Gear conjugate gradient. We conducted numerical experiments on one NVIDIA Tesla V100 GPU with 16GB of memory.

Our CG implementation uses the zero vector as its initial guess, while MBPCG's initial guesses do not include the zero vector, even when $k=1$. We chose not to standardize this because MBPCG does not converge for some small systems if $X^0$ has a zero column. Furthermore,  \cref{fig:sparse-trajectories,fig:dense-trajectories} show that the error trajectories for CG and $k=1$ are nearly identical despite this difference in initial guesses. This means that using randomized guesses in CG adds unnecessary steps without producing an iteration reduction.

We conducted experiments on 107 sparse SPD matrices from the SuiteSparse matrix collection. These matrices arise from real-world linear algebra problems in fields such as fluid dynamics, structural engineering, robotics, and grid optimization~\cite{SuiteSparse}. All right-hand side vectors ($b$) were generated randomly, with i.i.d. entries under the distribution $\text{Uniform}(-1, 1)$. We solved each system using CG, then MBPCG with $k$ set to 1, 2, 4, and 8. We then recorded iteration counts and solve times. We used a tolerance value of $10^{-8}$, one of several commonly used values~\cite{ZhengDong2011, Belyaeva2021, Bergamaschi2006, Ma2024}.

Since there is no public collection of real-world large and dense SPD matrices, we randomly generated $A$ in our dense experiments. Specifically, we randomly generated the $n \times n$ matrix $M$ with i.i.d. $\text{Uniform}(-1, 1)$ entries and set $A=M^TM$. $b$ has i.i.d. $\text{Uniform}(-1, 1)$ random entries.

In our implementation, solve times are significantly higher when $k\geq16$. Due to constraints such as the number of registers, $k \geq 16$ requires different kernel designs with considerable performance penalties. Our implementation is also optimized for $k$ being a power of 2.

We included the diagonal scaling steps in our timing.

\section{Experimental results}
In sparse systems tested, we observed moderate to significant reductions in iteration count as $k$ increases, which \cref{fig:sparse-trajectories} demonstrates. Note that CG has an effectively identical error trajectory as MBPCG at $k=1$.

\begin{figure}[htbp]
	\centering
	\begin{tikzpicture}
		\begin{axis}[
				hide axis,
				xmin=10, xmax=50, ymin=0, ymax=0.4,
				legend columns=-1,
				legend style={
					font=\footnotesize,
					/tikz/every even column/.style={column sep=0.5cm},
					at={(0.5,0.5)},
					anchor=center
				},
				legend reversed,
			]
			\addlegendimage{green!75!black, solid, mark=none, semithick}
			\addlegendentry{$k=8$}
			\addlegendimage{violet, dash dot dot, mark=none, thick}
			\addlegendentry{$k=4$}
			\addlegendimage{red, dash dot, mark=none, thick}
			\addlegendentry{$k=2$}
			\addlegendimage{cyan, densely dotted, mark=none, thick}
			\addlegendentry{$k=1$}
			\addlegendimage{yellow!65!black, dashed, mark=none, thick}
			\addlegendentry{CG}
		\end{axis}
	\end{tikzpicture}
	\par
	\vspace{0.25cm}
	\subfloat[Matrix \texttt{hood}]{%
		\begin{tikzpicture}
			\begin{axis}[
					width=0.50\linewidth,
					xlabel={Iteration},
					ylabel={Relative error},
					label style={font=\footnotesize},
					tick label style={font=\footnotesize},
					grid=major,
					grid style={very thin, gray!30},
					scaled x ticks=false,
					xticklabel style={/pgf/number format/fixed},
					ymin=1e-10,
					ymode=log,
					extra y ticks={1E-8},
					extra y tick labels={$10^{-8}$},
					extra y tick style={grid=major, grid style={thick, black, solid}},
					cycle list={
						{green!75!black, solid, mark=none, semithick},
						{violet, dash dot dot, mark=none, thick},
						{red, dash dot, mark=none, thick},
						{cyan, densely dotted, mark=none, thick},
						{yellow!65!black, dashed, mark=none, thick}
					}
				]
				\pgfplotsinvokeforeach{4,3,...,0} {
					\addplot+[
						unbounded coords=discard,
						filter discard warning=false,
						each nth point={10}
						] table [
						col sep=comma,
						x expr=\coordindex+1,
						y index=#1
					] {data/hood.csv};
				}
			\end{axis}
		\end{tikzpicture}%
	}%
	\hfill%
	\subfloat[Matrix \texttt{af\_shell4}]{%
		\begin{tikzpicture}
			\begin{axis}[
					width=0.50\linewidth,
					xlabel={Iteration},
					ylabel={Relative error},
					label style={font=\footnotesize},
					tick label style={font=\footnotesize},
					xticklabel style={/pgf/number format/fixed},
					grid=major,
					grid style={very thin, gray!30},
					ymin=1e-10,
					ymode=log,
					extra y ticks={1E-8},
					extra y tick labels={$10^{-8}$},
					extra y tick style={grid=major, grid style={thick, black, solid}},
					cycle list={
						{green!75!black, solid, mark=none, semithick},
						{violet, dash dot dot, mark=none, thick},
						{red, dash dot, mark=none, thick},
						{cyan, densely dotted, mark=none, thick},
						{yellow!65!black, dashed, mark=none, thick}
					}
				]
				\pgfplotsinvokeforeach{4,3,...,0} {
					\addplot+[
						unbounded coords=discard,
						filter discard warning=false,
						each nth point={6}
						] table [
						col sep=comma,
						x expr=\coordindex+1,
						y index=#1
					] {data/af_shell4.csv};
				}
			\end{axis}
		\end{tikzpicture}%
	}
	\caption{Error trajectories across iterations in select sparse systems. Increasing $k$ usually reduces iteration count, but the amount varies by matrix.}
	\label{fig:sparse-trajectories}
\end{figure}

The balance between iteration reduction and increased per-iteration cost means that the best $k$ depends on properties of the system. In most systems tested, either CG or $k=8$ is the best strategy. $k=4$ is the best in 11 of the systems tested, and $k=2$ is the best in one of them. The appendix shows the best solve strategy for each system.

We tested 15 dense systems, with $n$ between 40 and 19{,}200. Dense systems see significant iteration reductions, often more so than sparse systems. \cref{fig:dense-trajectories} shows examples of error trajectories. The speedup is also generally higher, and $k=8$ is the best strategy in all systems tested.

\begin{figure}[htbp]
	\centering
	\begin{tikzpicture}
		\begin{axis}[
				hide axis,
				xmin=10, xmax=50, ymin=0, ymax=0.4,
				legend columns=-1,
				legend style={
					font=\footnotesize,
					/tikz/every even column/.style={column sep=0.5cm},
					at={(0.5,0.5)},
					anchor=center
				},
				legend reversed,
			]
			\addlegendimage{green!75!black, solid, mark=none, semithick}
			\addlegendentry{$k=8$}
			\addlegendimage{violet, dash dot dot, mark=none, thick}
			\addlegendentry{$k=4$}
			\addlegendimage{red, dash dot, mark=none, thick}
			\addlegendentry{$k=2$}
			\addlegendimage{cyan, densely dotted, mark=none, thick}
			\addlegendentry{$k=1$}
			\addlegendimage{yellow!65!black, dashed, mark=none, thick}
			\addlegendentry{CG}
		\end{axis}
	\end{tikzpicture}
	\par
	\vspace{0.25cm}
	\subfloat[$n=4{,}800$]{%
		\begin{tikzpicture}
			\begin{axis}[
					width=0.48\linewidth,
					xlabel={Iteration},
					ylabel={Relative error},
					label style={font=\footnotesize},
					tick label style={font=\footnotesize},
					grid=major,
					grid style={very thin, gray!30},
					scaled x ticks=false,
					xticklabel style={/pgf/number format/fixed},
					xtick distance=2500,
					ymin=1e-10,
					ymode=log,
					extra y ticks={1E-8},
					extra y tick labels={$10^{-8}$},
					extra y tick style={grid=major, grid style={thick, black, solid}},
					cycle list={
						{green!75!black, solid, mark=none, semithick},
						{violet, dash dot dot, mark=none, thick},
						{red, dash dot, mark=none, thick},
						{cyan, densely dotted, mark=none, thick},
						{yellow!65!black, dashed, mark=none, thick}
					}
				]
				\pgfplotsinvokeforeach{4,3,...,0} {
					\addplot+[
						unbounded coords=discard,
						filter discard warning=false,
						each nth point={23}
						] table [
						col sep=comma,
						x expr=\coordindex+1,
						y index=#1
					] {data/dense_4800.csv};
				}
			\end{axis}
		\end{tikzpicture}%
	}%
	\hfill%
	\subfloat[$n=16{,}000$]{%
		\begin{tikzpicture}
			\begin{axis}[
					width=0.48\linewidth,
					xlabel={Iteration},
					ylabel={Relative error},
					label style={font=\footnotesize},
					tick label style={font=\footnotesize},
					xticklabel style={/pgf/number format/fixed},
					grid=major,
					grid style={very thin, gray!30},
					scaled x ticks=false,
					xticklabel style={/pgf/number format/fixed},
					ymin=1e-10,
					ymode=log,
					extra y ticks={1E-8},
					extra y tick labels={$10^{-8}$},
					extra y tick style={grid=major, grid style={thick, black, solid}},
					cycle list={
						{green!75!black, solid, mark=none, semithick},
						{violet, dash dot dot, mark=none, thick},
						{red, dash dot, mark=none, thick},
						{cyan, densely dotted, mark=none, thick},
						{yellow!65!black, dashed, mark=none, thick}
					}
				]
				\pgfplotsinvokeforeach{4,3,...,0} {
					\addplot+[
						unbounded coords=discard,
						filter discard warning=false,
						each nth point={4}
						] table [
						col sep=comma,
						x expr=\coordindex * 20 + 1, 
						y index=#1
					] {data/dense_16000_small.csv};
				}
			\end{axis}
		\end{tikzpicture}%
	}
	\caption{Error trajectories across iterations in select dense systems. Increasing $k$ reduces iteration count significantly, but with diminishing returns.}
	\label{fig:dense-trajectories}
\end{figure}

Although we implemented the sparse and dense solvers differently, it is worth investigating the two sets of results as one combined dataset to identify patterns across systems with drastically different properties. Let $t$ denote the solve time using CG. \cref{tab:regression-it,tab:regression-speedup-3-input} present the best multiple linear regression models we found for speedup and iteration ratio with a small number of inputs. We model $\log_{10}(\text{speedup})$ with $\log_{10}(n)$, $\log_{10}(\nnz)$, and $\log_{10}(t)$. Regarding iteration ratio, $\log_{10}(\nnz)$ does not have a statistically significant contribution and is dropped, resulting in a two-input regression model. We chose to measure iteration reduction using iteration ratio instead of its inverse because we did not find a good regression model for its inverse. Since $k=2$ is almost never the best strategy, we omitted it from these tables. The parameters of the regression models will likely differ if experiments are performed on different hardware and implementation, but we aim to show that a pattern for speedup potential exists.

\begin{table}
	\centering
	\begin{threeparttable}
		\caption{Regression models for iteration ratio. It is associated positively with $\log_{10}(n)$ and negatively with $\log_{10}(t)$.}
		\label{tab:regression-it}
		\renewcommand{\arraystretch}{1.3}
		\begin{tabular}{c r @{\enskip{}=\enskip} S[table-format=-1.4] S[table-format=-1.4] S[table-format=-1.4] c}
			\toprule
			\multicolumn{6}{l}{$\bm{k = 4}$ \hfill $R^2 = 0.6005$, adjusted $R^2 = 0.5938$} \\
			\midrule
			\multirow{2}{*}{Input} & \multicolumn{2}{c}{\multirow{2}{*}{Coefficient}} & \multicolumn{2}{c}{95\% confidence interval} & \multirow{2}{*}{P-value} \\
			\cmidrule(lr){4-5}
			& \multicolumn{2}{c}{} & {Lower bound} & {Upper bound} & \\
			\midrule
			constant       & $\beta_0$ & 0.1713  & 0.0595  & 0.2831  & \num{2.96e-\03} \\
			$\log_{10}(n)$ & $\beta_n$ & 0.1812  & 0.1505  & 0.2118  & \num{1.66e-21}  \\
			$\log_{10}(t)$ & $\beta_t$ & -0.1469 & -0.1715 & -0.1223 & \num{8.12e-22}  \\
			\midrule
			            
			\midrule
			\multicolumn{6}{l}{$\bm{k = 8}$ \hfill $R^2 = 0.6513$, adjusted $R^2 = 0.6454$} \\
			\midrule
			\multirow{2}{*}{Input} & \multicolumn{2}{c}{\multirow{2}{*}{Coefficient}} & \multicolumn{2}{c}{95\% confidence interval} & \multirow{2}{*}{P-value} \\
			\cmidrule(lr){4-5}
			& \multicolumn{2}{c}{} & {Lower bound} & {Upper bound} & \\
			\midrule
			constant       & $\beta_0$ & 0.0294  & -0.0893 & 0.1481  & \num{6.25e-\01} \\
			$\log_{10}(n)$ & $\beta_n$ & 0.2035  & 0.1709  & 0.2360  & \num{4.23e-23}  \\
			$\log_{10}(t)$ & $\beta_t$ & -0.1812 & -0.2073 & -0.1551 & \num{2.64e-26}  \\
			\bottomrule
		\end{tabular}
	\end{threeparttable}
\end{table}

\begin{table}
	\centering
	\begin{threeparttable}
		\caption{Regression models for $\log_{10}(\text{per-iteration time ratio})$. It is associated positively with $\log_{10}(n)$ and negatively with $\log_{10}(\nnz)$.}
		\label{tab:regression-time-per-it}
		\renewcommand{\arraystretch}{1.3}
		\begin{tabular}{c r @{\enskip{}=\enskip} S[table-format=-1.4] S[table-format=-1.4] S[table-format=-1.4] c}
			\toprule
			\multicolumn{6}{l}{$\bm{k = 4}$ \hfill $R^2 = 0.6588$, adjusted $R^2 = 0.6531$} \\
			\midrule
			\multirow{2}{*}{Input} & \multicolumn{2}{c}{\multirow{2}{*}{Coefficient}} & \multicolumn{2}{c}{95\% confidence interval} & \multirow{2}{*}{P-value} \\
			\cmidrule(lr){4-5}
			& \multicolumn{2}{c}{} & {Lower bound} & {Upper bound} & \\
			\midrule
			constant          & $\beta_0$ & 0.1315  & 0.0973  & 0.1657  & \num{6.71e-12}  \\
			$\log_{10}(n)$    & $\beta_n$ & 0.0899  & 0.0781  & 0.1018  & \num{2.99e-29}  \\
			$\log_{10}(\nnz)$ & $\beta_z$ & -0.0538 & -0.0635 & -0.0441 & \num{7.67e-20}  \\
			\midrule
			            
			\midrule
			\multicolumn{6}{l}{$\bm{k = 8}$ \hfill $R^2 = 0.8010$, adjusted $R^2 = 0.7977$} \\
			\midrule
			\multirow{2}{*}{Input} & \multicolumn{2}{c}{\multirow{2}{*}{Coefficient}} & \multicolumn{2}{c}{95\% confidence interval} & \multirow{2}{*}{P-value} \\
			\cmidrule(lr){4-5}
			& \multicolumn{2}{c}{} & {Lower bound} & {Upper bound} & \\
			\midrule
			constant          & $\beta_0$ & -0.0244 & -0.0855 & 0.0367  & \num{4.31e-\01} \\
			$\log_{10}(n)$    & $\beta_n$ & 0.2077  & 0.1864  & 0.2289  & \num{1.26e-38}  \\
			$\log_{10}(\nnz)$ & $\beta_z$ & -0.0858 & -0.1031 & -0.0685 & \num{5.52e-17}  \\
			\bottomrule
		\end{tabular}
	\end{threeparttable}
\end{table}

\begin{table}
	\centering
	\begin{threeparttable}
		\caption{Three-input regression models for $\log_{10}(\text{speedup})$. It is associated negatively with $\log_{10}(n)$ and positively with $\log_{10}(\nnz)$ and $\log_{10}(t)$.}
		\label{tab:regression-speedup-3-input}
		\renewcommand{\arraystretch}{1.3}
		\begin{tabular}{c r @{\enskip{}=\enskip} S[table-format=-1.4] S[table-format=-1.4] S[table-format=-1.4] c}
			\toprule
			\multicolumn{6}{l}{$\bm{k = 4}$ \hfill $R^2 = 0.7240$, adjusted $R^2 = 0.7169$} \\
			\midrule
			\multirow{2}{*}{Input} & \multicolumn{2}{c}{\multirow{2}{*}{Coefficient}} & \multicolumn{2}{c}{95\% confidence interval} & \multirow{2}{*}{P-value} \\
			\cmidrule(lr){4-5}
			& \multicolumn{2}{c}{} & {Lower bound} & {Upper bound} & \\
			\midrule
			constant          & $\beta_0$ & 0.4882  & 0.3659  & 0.6105  & \num{1.59e-12}  \\
			$\log_{10}(n)$    & $\beta_n$ & -0.2538 & -0.2926 & -0.2150 & \num{2.04e-24}  \\
			$\log_{10}(\nnz)$ & $\beta_z$ & 0.0512  & 0.0103  & 0.0922  & \num{1.47e-\02} \\
			$\log_{10}(t)$    & $\beta_t$ & 0.1290  & 0.1010  & 0.1571  & \num{2.53e-15}  \\
			\midrule
			    
			\midrule
			\multicolumn{6}{l}{$\bm{k = 8}$ \hfill $R^2 = 0.7935$, adjusted $R^2 = 0.7883$} \\
			\midrule
			\multirow{2}{*}{Input} & \multicolumn{2}{c}{\multirow{2}{*}{Coefficient}} & \multicolumn{2}{c}{95\% confidence interval} & \multirow{2}{*}{P-value} \\
			\cmidrule(lr){4-5}
			& \multicolumn{2}{c}{} & {Lower bound} & {Upper bound} & \\
			\midrule
			constant          & $\beta_0$ & 0.9402  & 0.7738  & 1.1066  & \num{2.98e-20}  \\
			$\log_{10}(n)$    & $\beta_n$ & -0.4249 & -0.4777 & -0.3721 & \num{3.28e-31}  \\
			$\log_{10}(\nnz)$ & $\beta_z$ & 0.0724  & 0.0167  & 0.1282  & \num{1.13e-\02} \\
			$\log_{10}(t)$    & $\beta_t$ & 0.1942  & 0.1561  & 0.2324  & \num{1.27e-17}  \\
			\bottomrule
		\end{tabular}
	\end{threeparttable}
\end{table}

Speedup depends on nnz, but iteration ratio does not. We can therefore deduce that nnz is negatively correlated with per-iteration time ratio for $k>1$ (i.e., the per-iteration penalty from solving multiple systems). \cref{tab:regression-time-per-it} shows that our experiments support this conclusion. Per-iteration time ratio does not have a statistically significant linear dependence on $t$. This is intuitive, as the computations performed at each operation only depend on $A$'s size and sparsity structure. $n$ and nnz already account for these factors, making $t$ redundant.

CG solve time depends heavily on the condition number and eigenvalue distribution of $A$, in addition to $n$ and nnz~\cite{Trefethen, Hausner2024}. The dependence of speedup and iteration ratio on CG solve time thus means that difficult-to-solve systems benefit the most from MBPCG. CG solve time can only be known after solving the system. However, in applications where systems with identical sparsity structures are repeatedly solved, the first solve may provide useful information to predict whether MBPCG can speed up subsequent solves. Although speedup has a negative linear relationship with $n$, MBPCG is not necessarily worse for systems with large $n$. These systems, on average, have more nonzeros and take longer to solve.

To aid visualization of the three-input models for $\log_{10}(\text{speedup})$, we can create equivalent two-input models. Letting $\hat y$ denote the predicted speedup, we have:
\begin{align}
	\begin{split}
	\hat y & = \beta_0 + \beta_n \log_{10}(n) + \beta_z\log_{10}(\nnz) + \beta_t\log_{10}(t)      \\
	       & = \beta_0 + \beta_z \log_{10}\left(\frac{\nnz}{n^\rho}\right) + \beta_t\log_{10}(t), 
	\end{split}
\end{align}
where $\rho = -{\beta_n}/{\beta_z}$.

$\nnz/n$ and $\nnz/n^2$ are common measures of square matrix density~\cite{Booth2024, Stylianou2023, Giannoula2022, Malik2020}. By extension, we define $d=\nnz/n^\rho$ as ``quasi-density" if $\rho>0$. We can then describe $\hat y$ with a two-input linear model with inputs $\log_{10}(d)$ and $\log_{10}(t)$. $\rho$ varies with $k$, as it characterizes the quasi-density measure that empirically yields the best regression model for a particular $k$. \cref{tab:regression-speedup-2-input} presents the two-input models. Note that the coefficients of the constant term and $\log_{10}(t)$ are identical to those in the three-input tables, and the coefficient of $\log_{10}(\nnz)$ is identical to that of $\log_{10}(d)$.

\begin{table}
	\centering
	\begin{threeparttable}
		\caption{Two-input regression models for $\log_{10}(\text{speedup})$. It is associated positively with $\log_{10}(d)$ and $\log_{10}(t)$.}
		\label{tab:regression-speedup-2-input}
		\centering
		\renewcommand{\arraystretch}{1.3}
		\begin{tabular}{c r @{\enskip{}=\enskip} S[table-format=-1.4] S[table-format=-1.4] S[table-format=-1.4] c}
			\toprule
			\multicolumn{6}{l}{$\bm{k = 4}$ \hfill $R^2 = 0.7240$, adjusted $R^2 = 0.7193$, $\rho = 4.9552$} \\
			\midrule
			\multirow{2}{*}{Input} & \multicolumn{2}{c}{\multirow{2}{*}{Coefficient}} & \multicolumn{2}{c}{95\% confidence interval} & \multirow{2}{*}{P-value} \\
			\cmidrule(lr){4-5}
			& \multicolumn{2}{c}{} & {Lower bound} & {Upper bound} & \\
			\midrule
			constant       & $\beta_0$ & 0.4882 & 0.4017 & 0.5747 & \num{2.91e-20} \\
			$\log_{10}(d)$ & $\beta_d$ & 0.0512 & 0.0451 & 0.0574 & \num{1.31e-32} \\
			$\log_{10}(t)$ & $\beta_t$ & 0.1290 & 0.1092 & 0.1488 & \num{2.40e-24} \\
			\midrule
			            
			\midrule
			\multicolumn{6}{l}{$\bm{k = 8}$ \hfill $R^2 = 0.7935$, adjusted $R^2 = 0.7901$, $\rho = 5.8654$} \\
			\midrule
			\multirow{2}{*}{Input} & \multicolumn{2}{c}{\multirow{2}{*}{Coefficient}} & \multicolumn{2}{c}{95\% confidence interval} & \multirow{2}{*}{P-value} \\
			\cmidrule(lr){4-5}
			& \multicolumn{2}{c}{} & {Lower bound} & {Upper bound} & \\
			\midrule
			constant       & $\beta_0$ & 0.9402 & 0.8201 & 1.0603 & \num{2.52e-30} \\
			$\log_{10}(d)$ & $\beta_d$ & 0.0724 & 0.0655 & 0.0793 & \num{2.12e-41} \\
			$\log_{10}(t)$ & $\beta_t$ & 0.1942 & 0.1670 & 0.2215 & \num{3.65e-27} \\
			\bottomrule
		\end{tabular}
	\end{threeparttable}
\end{table}

\cref{fig:scatter} displays the above performance metrics with respect to their best regressors. The choice between $k=4$ and $k=8$ usually does not affect whether $\text{speedup}>1$. However, $k=8$ has a greater best-case speedup potential and a greater worse-case slowdown potential than $k=4$. Given a linear system, one can determine the strategy to solve it using regression models in \cref{tab:regression-speedup-3-input}. We do not consider $k=2$, because it is best in only one of the systems tested. For dense systems, $k=8$ is best in all of our experiments.
\begin{figure}[htbp]
	\centering
	\begin{tikzpicture}
		\begin{groupplot}[
				group style={
					group size=2 by 3,
					horizontal sep=2.0cm,
					vertical sep=2.55cm,
				},
				width=0.48\linewidth,
				label style={font=\footnotesize},
				tick label style={font=\footnotesize},
				grid=major,
				grid style={very thin, gray!30},
				every axis plot/.append style={mark size=1.1pt}
			]
			
			\nextgroupplot[
				xlabel={$n$},
				ylabel={CG solve time ($t$)},
				ylabel near ticks,
				xmode=log,
				ymode=log,
				x label style={font=\footnotesize},
				y label style={font=\footnotesize},
				log basis x=10,
				point meta min=0,
				point meta max=1.16,
				colormap name=it_ratio_colors,
			]
			\addplot[scatter, only marks, scatter src=explicit, unbounded coords=discard] 
			table [col sep=comma, empty cells with={nan}, x={n}, y={Regular CG Time}, meta={k=4 It Ratio}] {data/experiment_results.csv};
			
			\nextgroupplot[
				xlabel={$n$},
				ylabel={CG solve time ($t$)},
				ylabel near ticks,
				xmode=log,
				ymode=log,
				x label style={font=\footnotesize},
				y label style={font=\footnotesize},
				log basis x=10,
				point meta min=0,
				point meta max=1.16,
				colormap name=it_ratio_colors,
			]
			\addplot[scatter, only marks, scatter src=explicit, unbounded coords=discard] 
			table [col sep=comma, empty cells with={nan}, x={n}, y={Regular CG Time}, meta={k=8 It Ratio}] {data/experiment_results.csv};
			
			\nextgroupplot[
				xlabel={$n$},
				ylabel={\nnz},
				ylabel near ticks,
				x label style={font=\footnotesize},
				y label style={font=\footnotesize},
				xmode=log,
				ymode=log,
				xmin=9e0,
				ymin=7e1,
				ymax=1e9,
				point meta min=-0.02, point meta max=0.82,
				colormap name=time_per_it_ratio_colors,
				enlargelimits=false,
				clip=true,
				axis on top=true
			]
			\addplot[scatter, only marks, scatter src=explicit] 
			table [col sep=comma, x={n}, y={nnz}, meta={k=4 Log It Time Ratio}] {data/experiment_results.csv};
			\addplot [ 
				domain=1:5e6,
				samples=2,
				fill=gray!10,
				draw=none,
				forget plot
			] {x} \closedcycle;   
			\path[fill=gray!10] 
			plot[domain=0:6.6989, samples=2] (axis cs:{10^\x}, {(10^\x)^2}) 
			-- (rel axis cs:1, 1)
			-- (rel axis cs:0, 1)
			-- cycle;
			
			\nextgroupplot[
				xlabel={$n$},
				ylabel={\nnz},
				ylabel near ticks,
				x label style={font=\footnotesize},
				y label style={font=\footnotesize},
				xmode=log,
				ymode=log,
				xmin=9e0,
				ymin=7e1,
				ymax=1e9,
				point meta min=-0.02, point meta max=0.82,
				colormap name=time_per_it_ratio_colors,
				enlargelimits=false,
				clip=true,
				axis on top=true
			]
			\addplot[scatter, only marks, scatter src=explicit] 
			table [col sep=comma, x={n}, y={nnz}, meta={k=8 Log It Time Ratio}] {data/experiment_results.csv};
			\addplot [ 
				domain=1:5e6,
				samples=2,
				fill=gray!10,
				draw=none,
				forget plot
			] {x} \closedcycle;   
			\path[fill=gray!10] 
			plot[domain=0:6.6989, samples=2] (axis cs:{10^\x}, {(10^\x)^2}) 
			-- (rel axis cs:1, 1)
			-- (rel axis cs:0, 1)
			-- cycle;
			
			\nextgroupplot[
				xlabel style={align=center},
				xlabel={
					Quasi-density ($d=\frac{\nnz}{n^{\rho}}, \rho=4.955186$)
				},
				ylabel={CG solve time}, ylabel near ticks,
				x label style={font=\footnotesize},
				y label style={font=\footnotesize},
				xmode=log, ymode=log,
				point meta min=-0.783, point meta max=0.783,
				colormap name=speedup_colors,
			]
			\addplot[
				scatter, only marks, scatter src=explicit,
				visualization depends on={value \thisrow{k=4 Log Speedup} \as \logspeedup},
				scatter/@pre marker code/.append code={
					\pgfmathparse{\logspeedup > 0}
					\ifnum\pgfmathresult=1
						\def\markopts{mark=square*, mark size=0.8635pt}
					\else
						\def\markopts{mark=*}
					\fi
					\expandafter\scope\expandafter[\markopts]
				},
				scatter/@post marker code/.append code={
					\endscope
				}
			] 
			table [col sep=comma, x={k=4 Speedup Density}, y={Regular CG Time}, meta={k=4 Log Speedup}] {data/experiment_results.csv};
			
			\addplot[domain=1e-23 : 1e-9, samples=5, thick, black, forget plot] 
			{10^(-(0.488191/0.129015)) * x^(-(0.051223/0.129015))}
			node [anchor=south east, font=\footnotesize] at (rel axis cs:0.93, 0) {$\hat y = 1$};
			
			\nextgroupplot[
				xlabel style={align=center},
				xlabel={
					Quasi-density ($d=\frac{\nnz}{n^{\rho}}, \rho=5.865374$)
				},
				ylabel={CG solve time}, ylabel near ticks,
				x label style={font=\footnotesize},
				y label style={font=\footnotesize},
				xmode=log, ymode=log,
				point meta min=-0.783, point meta max=0.783,
				colormap name=speedup_colors,
			]
			\addplot[
				scatter, only marks, scatter src=explicit,
				visualization depends on={value \thisrow{k=8 Log Speedup} \as \logspeedup},
				scatter/@pre marker code/.append code={
					\pgfmathparse{\logspeedup > 0}
					\ifnum\pgfmathresult=1
						\def\markopts{mark=square*, mark size=0.8635pt} 
					\else
						\def\markopts{mark=*}
					\fi
					\expandafter\scope\expandafter[\markopts]
				},
				scatter/@post marker code/.append code={
					\endscope
				}
			] 
			table [col sep=comma, x={k=8 Speedup Density}, y={Regular CG Time}, meta={k=8 Log Speedup}] {data/experiment_results.csv};
			
			\addplot[domain=1e-28 : 5e-13, samples=5, thick, black, forget plot] 
			{10^(-(0.940219/0.194237)) * x^(-(0.072440/0.194237))}
			node [anchor=south east, font=\footnotesize] at (rel axis cs:0.9, 0) {$\hat y = 1$};
			
		\end{groupplot}
		        
		\path (group c1r1.north) -- (group c2r1.north) coordinate[midway] (mid1);
		\begin{axis}[
				at={(mid1)}, yshift=0.6cm, anchor=south,
				hide axis, scale only axis, height=0pt, width=0.5\linewidth,
				point meta min=0, point meta max=1.16,
				colormap name=it_ratio_colors,
				colorbar horizontal,
				colorbar/width=1.75mm,
				colorbar style={
					at={(0.5,0)}, anchor=south,
					width=0.6\linewidth,
					title={Iteration ratio},
					title style={font=\footnotesize, yshift=-1.5mm},
					tick style={black, opacity=0.2, line width=0.75pt},
					tick label style={font=\footnotesize},
					yticklabel={%
						\pgfmathparse{\tick}%
						\pgfmathprintnumber[fixed, fixed zerofill, precision=2]{\pgfmathresult}%
					}
				}
			]
		\end{axis}
		
		\path (group c1r2.north) -- (group c2r2.north) coordinate[midway] (mid2);
		\begin{axis}[
				at={(mid2)}, yshift=0.6cm, anchor=south,
				hide axis, scale only axis, height=0pt, width=0.5\linewidth,
				point meta min=-0.02, point meta max=0.82,
				colormap name=time_per_it_ratio_colors,
				colorbar horizontal, colorbar/width=1.75mm,
				colorbar style={
					at={(0.5,0)}, anchor=south, width=0.6\linewidth,
					title={Per-iteration time ratio (log scale)},
					title style={font=\footnotesize, yshift=-1.5mm},
					tick style={black, opacity=0.2, line width=0.75pt},
					tick label style={font=\footnotesize},
					xtick={-0.02, -0.02 + 0.84/6, -0.02 + 0.84*2/6, -0.02 + 0.84*3/6, -0.02 + 0.84*4/6, -0.02 + 0.84*5/6, 0.82},
					xticklabel={
						\pgfmathparse{10^\tick}
						\pgfmathprintnumber[fixed, fixed zerofill, precision=2]{\pgfmathresult}
					}
				}
			]
		\end{axis}
		        
		\path (group c1r3.north) -- (group c2r3.north) coordinate[midway] (mid3);
		\begin{axis}[
				at={(mid3)}, yshift=0.6cm, anchor=south,
				hide axis, scale only axis, height=0pt, width=0.5\linewidth,
				point meta min=-0.783, point meta max=0.783,
				colormap name=speedup_colors,
				colorbar horizontal, colorbar/width=1.75mm,
				colorbar style={
					at={(0.5,0)}, anchor=south, width=0.6\linewidth,
					xmin=-0.783, xmax=0.783,
					clip=false,
					xtick={-0.783, -0.783*2/3, -0.783/3, 0, 0.783/3, 0.783*2/3, 0.783},
					title={Speedup (log scale)},
					title style={font=\footnotesize, yshift=-1.5mm},
					tick style={black, opacity=0.2, line width=0.75pt},
					tick label style={font=\footnotesize},
					xticklabel={
						\pgfmathparse{10^\tick}
						\pgfmathprintnumber[fixed, fixed zerofill, precision=2]{\pgfmathresult}
					},
					after end axis/.code={
						\node[anchor=east, xshift=-3.5mm] at (rel axis cs:0, 0.5) {
							\begingroup
							\pgfsetfillcolor{rgb,255:red,0;green,51;blue,110}%
							\pgfsetstrokecolor{rgb,255:red,0;green,24;blue,51}%
							\pgfsetlinewidth{0.4pt}%
							\pgfsetplotmarksize{1mm}%
							\pgfuseplotmark{*}%
							\endgroup
						};
						\node[anchor=west, xshift=3.5mm] at (rel axis cs:1, 0.5) {
							\begingroup
							\pgfsetfillcolor{rgb,255:red,122;green,35;blue,0}%
							\pgfsetstrokecolor{rgb,255:red,61;green,17;blue,0}%
							\pgfsetlinewidth{0.4pt}%
							\pgfsetplotmarksize{0.875mm}%
							\pgfuseplotmark{square*}%
							\endgroup
						};
					}
				}
			]
		\end{axis}
		
		\node[anchor=north, font=\footnotesize, yshift=-22pt] at (group c1r1.south) {(a) $k=4$};
		\node[anchor=north, font=\footnotesize, yshift=-22pt] at (group c2r1.south) {(b) $k=8$};
		\node[anchor=north, font=\footnotesize, yshift=-22pt] at (group c1r2.south) {(c) $k=4$};
		\node[anchor=north, font=\footnotesize, yshift=-22pt] at (group c2r2.south) {(d) $k=8$};
		\node[anchor=north, font=\footnotesize, yshift=-26pt] at (group c1r3.south) {(e) $k=4$};
		\node[anchor=north, font=\footnotesize, yshift=-26pt] at (group c2r3.south) {(f) $k=8$};
	\end{tikzpicture}
	\setlength{\abovecaptionskip}{5pt}
	\captionsetup{font=footnotesize}
	\caption{Performance metrics of MBPCG relative to CG at $k=4$ and $k=8$. These plots visualize the linear relationships in \cref{tab:regression-it,tab:regression-time-per-it,tab:regression-speedup-2-input}. For iteration ratio and speedup, $k=8$ has greater gains than $k=4$ in the best cases but greater penalties in the worst cases. $k=8$ almost always incurs higher per-iteration execution times, especially when $n$ is large. Red indicates high MBPCG effectiveness, and blue indicates low effectiveness.}
	\label{fig:scatter}
\end{figure}

\section{Summary}
The MBPCG method's effectiveness depends heavily on properties of the system. In some systems, it significantly reduces iteration count and solve time. In others, it slightly increases iteration count and significantly increases solve time. Our statistical model analyzes trends behind these metrics and predicts whether the method will succeed. We find that speedup is positively associated with CG solve time and the number of nonzeros, while it is negatively associated with $n$. Improved SpMM kernels or hardware capabilities can increase the effectiveness of this method if they lead to lower asymptotic solve times with respect to $k$. There is also room to accelerate convergence by using better initial guesses, which do not necessarily need to be random. The idea of solving multiple copies of a system in a BCG-like approach can be applied to other CG variants or other iterative methods to potentially achieve speedups.

\section*{Acknowledgments}
We thank Slaven Pele\v{s} for writing the proposal that funded this work.

\bibliographystyle{siamplain}
\bibliography{bibfile}

\begin{landscape}
	\appendix
	\renewcommand{\thesection}{}
	\section*{Appendix}
	\label[appendix]{sec:Appendix}
	\addcontentsline{toc}{section}{Appendix}
	    
	\centering
	\setlength{\tabcolsep}{3.5pt}
	    
	\pgfplotstableset{
		matrix col style/.style={
			column name=Matrix, 
			string type, 
			column type={@{\extracolsep{\fill}}l}, 
			string replace*={_}{\_},
			postproc cell content/.append style={
				/pgfplots/table/@cell content/.add={\ttfamily}{}%
			}
		}
	}
	
	\pgfplotstableread[col sep=comma]{data/experiment_results.csv}\datatable
	\pgfplotstableforeachcolumnelement{Best k}\of\datatable\as\bestkcell{%
		\expandafter\xdef\csname best_k@\pgfplotstablerow\endcsname{\bestkcell}%
	}
	 
	\vspace{1em}
	\captionof{table}{Test matrices and experimental data for sparse systems, $n \leq 4{,}515$. Bold indicates the best solve strategy.}
	\label{tab:sparse-results-1}
	\pgfplotstabletypeset[
		begin table={\begin{tabular*}{\linewidth}},
		end table={\end{tabular*}},
		col sep=comma,
		font=\footnotesize,
		skip rows between index={29}{1000},
		every head row/.style={
			before row={
				\toprule
				\multicolumn{3}{c}{} & \multicolumn{4}{c}{\textbf{Iteration count}} & \multicolumn{4}{c}{\textbf{Total solve time (ms)}} & \multicolumn{4}{c}{\textbf{Time per iteration (ms)}} \\
				\cmidrule(lr){4-7} \cmidrule(lr){8-11} \cmidrule(lr){12-15}
			},
			after row=\midrule
		},
		every first row/.style={
			before row={\rule{0pt}{3ex}} 
		},
		every last row/.style={after row=\bottomrule},
		columns={Matrix, n, nnz, {Regular CG It}, {k=2 It}, {k=4 It}, {k=8 It}, {Regular CG Time}, {k=2 Time}, {k=4 Time}, {k=8 Time}, {Regular CG It Time}, {k=2 It Time}, {k=4 It Time}, {k=8 It Time}},
		columns/Matrix/.style={matrix col style},
		columns/n/.style={column name=$n$, fixed, precision=0, column type={r}},
		columns/nnz/.style={column name=nnz, fixed, precision=0, column type={r}},
		columns/{Regular CG It}/.style={column name={CG}, fixed, precision=0, column type={r}},
		columns/{k=2 It}/.style={column name={$k=2$}, fixed, precision=0, column type={r}},
		columns/{k=4 It}/.style={column name={$k=4$}, fixed, precision=0, column type={r}},
		columns/{k=8 It}/.style={column name={$k=8$}, fixed, precision=0, column type={r}},
		columns/{Regular CG Time}/.style={
			column name={CG}, fixed, fixed zerofill, precision=2, column type={r},
			postproc cell content/.append code={
				\edef\bestkvalue{\csname best_k@\pgfplotstablerow\endcsname}
				\ifdefstring{\bestkvalue}{CG}{
					\pgfkeysgetvalue{/pgfplots/table/@cell content}\besttmp
					\edef\besttmp{{\noexpand\boldmath\besttmp}}
					\pgfkeyslet{/pgfplots/table/@cell content}\besttmp
					}{}
			}
		},
		columns/{k=2 Time}/.style={
			column name={$k=2$}, fixed, fixed zerofill, precision=2, column type={r},
			postproc cell content/.append code={
				\edef\bestkvalue{\csname best_k@\pgfplotstablerow\endcsname}
				\ifdefstring{\bestkvalue}{k=2}{
					\pgfkeysgetvalue{/pgfplots/table/@cell content}\besttmp
					\edef\besttmp{{\noexpand\boldmath\besttmp}}
					\pgfkeyslet{/pgfplots/table/@cell content}\besttmp
					}{}
			}
		},
		columns/{k=4 Time}/.style={
			column name={$k=4$}, fixed, fixed zerofill, precision=2, column type={r},
			postproc cell content/.append code={
				\edef\bestkvalue{\csname best_k@\pgfplotstablerow\endcsname}
				\ifdefstring{\bestkvalue}{k=4}{
					\pgfkeysgetvalue{/pgfplots/table/@cell content}\besttmp
					\edef\besttmp{{\noexpand\boldmath\besttmp}}
					\pgfkeyslet{/pgfplots/table/@cell content}\besttmp
					}{}
			}
		},
		columns/{k=8 Time}/.style={
			column name={$k=8$}, fixed, fixed zerofill, precision=2, column type={r},
			postproc cell content/.append code={
				\edef\bestkvalue{\csname best_k@\pgfplotstablerow\endcsname}
				\ifdefstring{\bestkvalue}{k=8}{
					\pgfkeysgetvalue{/pgfplots/table/@cell content}\besttmp
					\edef\besttmp{{\noexpand\boldmath\besttmp}}
					\pgfkeyslet{/pgfplots/table/@cell content}\besttmp
					}{}
			}
		},
		columns/{Regular CG It Time}/.style={column name={CG}, fixed, fixed zerofill, precision=3, column type={r}},
		columns/{k=2 It Time}/.style={column name={$k=2$}, fixed, fixed zerofill, precision=3, column type={r}},
		columns/{k=4 It Time}/.style={column name={$k=4$}, fixed, fixed zerofill, precision=3, column type={r}},
		columns/{k=8 It Time}/.style={column name={$k=8$}, fixed, fixed zerofill, precision=3, column type={r}}
	]\datatable
	 
	\clearpage
	 
	\captionof{table}{Test matrices and experimental data for sparse systems, $4{,}800 \leq n \leq 40{,}000$. Bold indicates the best solve strategy.}
	\label{tab:sparse-results-2}
	\pgfplotstabletypeset[
		begin table={\begin{tabular*}{\linewidth}},
		end table={\end{tabular*}},
		col sep=comma,
		font=\footnotesize,
		skip rows between index={0}{29},
		skip rows between index={59}{1000},
		every head row/.style={
			before row={
				\toprule
				\multicolumn{3}{c}{} & \multicolumn{4}{c}{\textbf{Iteration count}} & \multicolumn{4}{c}{\textbf{Total solve time (ms)}} & \multicolumn{4}{c}{\textbf{Time per iteration (ms)}} \\
				\cmidrule(lr){4-7} \cmidrule(lr){8-11} \cmidrule(lr){12-15}
			},
			after row=\midrule
		},
		every first row/.style={
			before row={\rule{0pt}{3ex}} 
		},
		every last row/.style={after row=\bottomrule},
		columns={Matrix, n, nnz, {Regular CG It}, {k=2 It}, {k=4 It}, {k=8 It}, {Regular CG Time}, {k=2 Time}, {k=4 Time}, {k=8 Time}, {Regular CG It Time}, {k=2 It Time}, {k=4 It Time}, {k=8 It Time}},
		columns/Matrix/.style={matrix col style},
		columns/n/.style={column name=$n$, fixed, precision=0, column type={r}},
		columns/nnz/.style={column name=nnz, fixed, precision=0, column type={r}},
		columns/{Regular CG It}/.style={column name={CG}, fixed, precision=0, column type={r}},
		columns/{k=2 It}/.style={column name={$k=2$}, fixed, precision=0, column type={r}},
		columns/{k=4 It}/.style={column name={$k=4$}, fixed, precision=0, column type={r}},
		columns/{k=8 It}/.style={column name={$k=8$}, fixed, precision=0, column type={r}},
		columns/{Regular CG Time}/.style={
			column name={CG}, fixed, fixed zerofill, precision=2, column type={r},
			postproc cell content/.append code={
				\edef\bestkvalue{\csname best_k@\pgfplotstablerow\endcsname}
				\ifdefstring{\bestkvalue}{CG}{
					\pgfkeysgetvalue{/pgfplots/table/@cell content}\besttmp
					\edef\besttmp{{\noexpand\boldmath\besttmp}}
					\pgfkeyslet{/pgfplots/table/@cell content}\besttmp
					}{}
			}
		},
		columns/{k=2 Time}/.style={
			column name={$k=2$}, fixed, fixed zerofill, precision=2, column type={r},
			postproc cell content/.append code={
				\edef\bestkvalue{\csname best_k@\pgfplotstablerow\endcsname}
				\ifdefstring{\bestkvalue}{k=2}{
					\pgfkeysgetvalue{/pgfplots/table/@cell content}\besttmp
					\edef\besttmp{{\noexpand\boldmath\besttmp}}
					\pgfkeyslet{/pgfplots/table/@cell content}\besttmp
					}{}
			}
		},
		columns/{k=4 Time}/.style={
			column name={$k=4$}, fixed, fixed zerofill, precision=2, column type={r},
			postproc cell content/.append code={
				\edef\bestkvalue{\csname best_k@\pgfplotstablerow\endcsname}
				\ifdefstring{\bestkvalue}{k=4}{
					\pgfkeysgetvalue{/pgfplots/table/@cell content}\besttmp
					\edef\besttmp{{\noexpand\boldmath\besttmp}}
					\pgfkeyslet{/pgfplots/table/@cell content}\besttmp
					}{}
			}
		},
		columns/{k=8 Time}/.style={
			column name={$k=8$}, fixed, fixed zerofill, precision=2, column type={r},
			postproc cell content/.append code={
				\edef\bestkvalue{\csname best_k@\pgfplotstablerow\endcsname}
				\ifdefstring{\bestkvalue}{k=8}{
					\pgfkeysgetvalue{/pgfplots/table/@cell content}\besttmp
					\edef\besttmp{{\noexpand\boldmath\besttmp}}
					\pgfkeyslet{/pgfplots/table/@cell content}\besttmp
					}{}
			}
		},
		columns/{Regular CG It Time}/.style={column name={CG}, fixed, fixed zerofill, precision=3, column type={r}},
		columns/{k=2 It Time}/.style={column name={$k=2$}, fixed, fixed zerofill, precision=3, column type={r}},
		columns/{k=4 It Time}/.style={column name={$k=4$}, fixed, fixed zerofill, precision=3, column type={r}},
		columns/{k=8 It Time}/.style={column name={$k=8$}, fixed, fixed zerofill, precision=3, column type={r}}
	]\datatable
	 
	\clearpage
	 
	\captionof{table}{Test matrices and experimental data for sparse systems, $40{,}806 \leq n \leq 220{,}542$. Bold indicates the best solve strategy.}
	\label{tab:sparse-results-3}
	\pgfplotstabletypeset[
		begin table={\begin{tabular*}{\linewidth}},
		end table={\end{tabular*}},
		col sep=comma,
		font=\footnotesize,
		skip rows between index={0}{59},
		skip rows between index={89}{1000},
		every head row/.style={
			before row={
				\toprule
				\multicolumn{3}{c}{} & \multicolumn{4}{c}{\textbf{Iteration count}} & \multicolumn{4}{c}{\textbf{Total solve time (ms)}} & \multicolumn{4}{c}{\textbf{Time per iteration (ms)}} \\
				\cmidrule(lr){4-7} \cmidrule(lr){8-11} \cmidrule(lr){12-15}
			},
			after row=\midrule
		},
		every first row/.style={
			before row={\rule{0pt}{3ex}} 
		},
		every last row/.style={after row=\bottomrule},
		columns={Matrix, n, nnz, {Regular CG It}, {k=2 It}, {k=4 It}, {k=8 It}, {Regular CG Time}, {k=2 Time}, {k=4 Time}, {k=8 Time}, {Regular CG It Time}, {k=2 It Time}, {k=4 It Time}, {k=8 It Time}},
		columns/Matrix/.style={matrix col style},
		columns/n/.style={column name=$n$, fixed, precision=0, column type={r}},
		columns/nnz/.style={column name=nnz, fixed, precision=0, column type={r}},
		columns/{Regular CG It}/.style={column name={CG}, fixed, precision=0, column type={r}},
		columns/{k=2 It}/.style={column name={$k=2$}, fixed, precision=0, column type={r}},
		columns/{k=4 It}/.style={column name={$k=4$}, fixed, precision=0, column type={r}},
		columns/{k=8 It}/.style={column name={$k=8$}, fixed, precision=0, column type={r}},
		columns/{Regular CG Time}/.style={
			column name={CG}, fixed, fixed zerofill, precision=2, column type={r},
			postproc cell content/.append code={
				\edef\bestkvalue{\csname best_k@\pgfplotstablerow\endcsname}
				\ifdefstring{\bestkvalue}{CG}{
					\pgfkeysgetvalue{/pgfplots/table/@cell content}\besttmp
					\edef\besttmp{{\noexpand\boldmath\besttmp}}
					\pgfkeyslet{/pgfplots/table/@cell content}\besttmp
					}{}
			}
		},
		columns/{k=2 Time}/.style={
			column name={$k=2$}, fixed, fixed zerofill, precision=2, column type={r},
			postproc cell content/.append code={
				\edef\bestkvalue{\csname best_k@\pgfplotstablerow\endcsname}
				\ifdefstring{\bestkvalue}{k=2}{
					\pgfkeysgetvalue{/pgfplots/table/@cell content}\besttmp
					\edef\besttmp{{\noexpand\boldmath\besttmp}}
					\pgfkeyslet{/pgfplots/table/@cell content}\besttmp
					}{}
			}
		},
		columns/{k=4 Time}/.style={
			column name={$k=4$}, fixed, fixed zerofill, precision=2, column type={r},
			postproc cell content/.append code={
				\edef\bestkvalue{\csname best_k@\pgfplotstablerow\endcsname}
				\ifdefstring{\bestkvalue}{k=4}{
					\pgfkeysgetvalue{/pgfplots/table/@cell content}\besttmp
					\edef\besttmp{{\noexpand\boldmath\besttmp}}
					\pgfkeyslet{/pgfplots/table/@cell content}\besttmp
					}{}
			}
		},
		columns/{k=8 Time}/.style={
			column name={$k=8$}, fixed, fixed zerofill, precision=2, column type={r},
			postproc cell content/.append code={
				\edef\bestkvalue{\csname best_k@\pgfplotstablerow\endcsname}
				\ifdefstring{\bestkvalue}{k=8}{
					\pgfkeysgetvalue{/pgfplots/table/@cell content}\besttmp
					\edef\besttmp{{\noexpand\boldmath\besttmp}}
					\pgfkeyslet{/pgfplots/table/@cell content}\besttmp
					}{}
			}
		},
		columns/{Regular CG It Time}/.style={column name={CG}, fixed, fixed zerofill, precision=3, column type={r}},
		columns/{k=2 It Time}/.style={column name={$k=2$}, fixed, fixed zerofill, precision=3, column type={r}},
		columns/{k=4 It Time}/.style={column name={$k=4$}, fixed, fixed zerofill, precision=3, column type={r}},
		columns/{k=8 It Time}/.style={column name={$k=8$}, fixed, fixed zerofill, precision=3, column type={r}}
	]\datatable
	 
	\clearpage
	 
	\captionof{table}{Test matrices and experimental data for sparse systems, $245{,}874 \leq n$. Bold indicates the best solve strategy.}
	\label{tab:sparse-results-4}
	\pgfplotstabletypeset[
		begin table={\begin{tabular*}{\linewidth}},
		end table={\end{tabular*}},
		col sep=comma,
		font=\footnotesize,
		skip rows between index={0}{89},
		skip rows between index={107}{1000},
		every head row/.style={
			before row={
				\toprule
				\multicolumn{3}{c}{} & \multicolumn{4}{c}{\textbf{Iteration count}} & \multicolumn{4}{c}{\textbf{Total solve time (ms)}} & \multicolumn{4}{c}{\textbf{Time per iteration (ms)}} \\
				\cmidrule(lr){4-7} \cmidrule(lr){8-11} \cmidrule(lr){12-15}
			},
			after row=\midrule
		},
		every first row/.style={
			before row={\rule{0pt}{3ex}} 
		},
		every last row/.style={after row=\bottomrule},
		columns={Matrix, n, nnz, {Regular CG It}, {k=2 It}, {k=4 It}, {k=8 It}, {Regular CG Time}, {k=2 Time}, {k=4 Time}, {k=8 Time}, {Regular CG It Time}, {k=2 It Time}, {k=4 It Time}, {k=8 It Time}},
		columns/Matrix/.style={matrix col style},
		columns/n/.style={column name=$n$, fixed, precision=0, column type={r}},
		columns/nnz/.style={column name=nnz, fixed, precision=0, column type={r}},
		columns/{Regular CG It}/.style={column name={CG}, fixed, precision=0, column type={r}},
		columns/{k=2 It}/.style={column name={$k=2$}, fixed, precision=0, column type={r}},
		columns/{k=4 It}/.style={column name={$k=4$}, fixed, precision=0, column type={r}},
		columns/{k=8 It}/.style={column name={$k=8$}, fixed, precision=0, column type={r}},
		columns/{Regular CG Time}/.style={
			column name={CG}, fixed, fixed zerofill, precision=2, column type={r},
			postproc cell content/.append code={
				\edef\bestkvalue{\csname best_k@\pgfplotstablerow\endcsname}
				\ifdefstring{\bestkvalue}{CG}{
					\pgfkeysgetvalue{/pgfplots/table/@cell content}\besttmp
					\edef\besttmp{{\noexpand\boldmath\besttmp}}
					\pgfkeyslet{/pgfplots/table/@cell content}\besttmp
					}{}
			}
		},
		columns/{k=2 Time}/.style={
			column name={$k=2$}, fixed, fixed zerofill, precision=2, column type={r},
			postproc cell content/.append code={
				\edef\bestkvalue{\csname best_k@\pgfplotstablerow\endcsname}
				\ifdefstring{\bestkvalue}{k=2}{
					\pgfkeysgetvalue{/pgfplots/table/@cell content}\besttmp
					\edef\besttmp{{\noexpand\boldmath\besttmp}}
					\pgfkeyslet{/pgfplots/table/@cell content}\besttmp
					}{}
			}
		},
		columns/{k=4 Time}/.style={
			column name={$k=4$}, fixed, fixed zerofill, precision=2, column type={r},
			postproc cell content/.append code={
				\edef\bestkvalue{\csname best_k@\pgfplotstablerow\endcsname}
				\ifdefstring{\bestkvalue}{k=4}{
					\pgfkeysgetvalue{/pgfplots/table/@cell content}\besttmp
					\edef\besttmp{{\noexpand\boldmath\besttmp}}
					\pgfkeyslet{/pgfplots/table/@cell content}\besttmp
					}{}
			}
		},
		columns/{k=8 Time}/.style={
			column name={$k=8$}, fixed, fixed zerofill, precision=2, column type={r},
			postproc cell content/.append code={
				\edef\bestkvalue{\csname best_k@\pgfplotstablerow\endcsname}
				\ifdefstring{\bestkvalue}{k=8}{
					\pgfkeysgetvalue{/pgfplots/table/@cell content}\besttmp
					\edef\besttmp{{\noexpand\boldmath\besttmp}}
					\pgfkeyslet{/pgfplots/table/@cell content}\besttmp
					}{}
			}
		},
		columns/{Regular CG It Time}/.style={column name={CG}, fixed, fixed zerofill, precision=3, column type={r}},
		columns/{k=2 It Time}/.style={column name={$k=2$}, fixed, fixed zerofill, precision=3, column type={r}},
		columns/{k=4 It Time}/.style={column name={$k=4$}, fixed, fixed zerofill, precision=3, column type={r}},
		columns/{k=8 It Time}/.style={column name={$k=8$}, fixed, fixed zerofill, precision=3, column type={r}}
	]\datatable
	 
	\clearpage
	 
	\captionof{table}{Test matrices and experimental data for dense systems. Bold indicates the best solve strategy.}
	\label{tab:dense-results}
	\pgfplotstabletypeset[
		begin table={\begin{tabular*}{\linewidth}},
		end table={\end{tabular*}},
		col sep=comma,
		font=\footnotesize,
		skip rows between index={0}{107},
		every head row/.style={
			before row={
				\toprule
				\multicolumn{2}{c}{} & \multicolumn{4}{c}{\textbf{Iteration count}} & \multicolumn{4}{c}{\textbf{Total solve time (ms)}} & \multicolumn{4}{c}{\textbf{Time per iteration (ms)}} \\
				\cmidrule(lr){3-6} \cmidrule(lr){7-10} \cmidrule(lr){11-14}
			},
			after row=\midrule
		},
		every first row/.style={
			before row={\rule{0pt}{3ex}} 
		},
		every last row/.style={after row=\bottomrule},
		columns={Matrix, n, {Regular CG It}, {k=2 It}, {k=4 It}, {k=8 It}, {Regular CG Time}, {k=2 Time}, {k=4 Time}, {k=8 Time}, {Regular CG It Time}, {k=2 It Time}, {k=4 It Time}, {k=8 It Time}},
		columns/Matrix/.style={column name=Matrix, string type, column type={@{\extracolsep{\fill}}l}, string replace*={_}{\_}},
		columns/n/.style={column name=$n$, fixed, precision=0, column type={r}},
		columns/{Regular CG It}/.style={column name={CG}, fixed, precision=0, column type={r}},
		columns/{k=2 It}/.style={column name={$k=2$}, fixed, precision=0, column type={r}},
		columns/{k=4 It}/.style={column name={$k=4$}, fixed, precision=0, column type={r}},
		columns/{k=8 It}/.style={column name={$k=8$}, fixed, precision=0, column type={r}},
		columns/{Regular CG Time}/.style={
			column name={CG}, fixed, fixed zerofill, precision=2, column type={r},
			postproc cell content/.append code={
				\edef\bestkvalue{\csname best_k@\pgfplotstablerow\endcsname}
				\ifdefstring{\bestkvalue}{CG}{
					\pgfkeysgetvalue{/pgfplots/table/@cell content}\besttmp
					\edef\besttmp{{\noexpand\boldmath\besttmp}}
					\pgfkeyslet{/pgfplots/table/@cell content}\besttmp
					}{}
			}
		},
		columns/{k=2 Time}/.style={
			column name={$k=2$}, fixed, fixed zerofill, precision=2, column type={r},
			postproc cell content/.append code={
				\edef\bestkvalue{\csname best_k@\pgfplotstablerow\endcsname}
				\ifdefstring{\bestkvalue}{k=2}{
					\pgfkeysgetvalue{/pgfplots/table/@cell content}\besttmp
					\edef\besttmp{{\noexpand\boldmath\besttmp}}
					\pgfkeyslet{/pgfplots/table/@cell content}\besttmp
					}{}
			}
		},
		columns/{k=4 Time}/.style={
			column name={$k=4$}, fixed, fixed zerofill, precision=2, column type={r},
			postproc cell content/.append code={
				\edef\bestkvalue{\csname best_k@\pgfplotstablerow\endcsname}
				\ifdefstring{\bestkvalue}{k=4}{
					\pgfkeysgetvalue{/pgfplots/table/@cell content}\besttmp
					\edef\besttmp{{\noexpand\boldmath\besttmp}}
					\pgfkeyslet{/pgfplots/table/@cell content}\besttmp
					}{}
			}
		},
		columns/{k=8 Time}/.style={
			column name={$k=8$}, fixed, fixed zerofill, precision=2, column type={r},
			postproc cell content/.append code={
				\edef\bestkvalue{\csname best_k@\pgfplotstablerow\endcsname}
				\ifdefstring{\bestkvalue}{k=8}{
					\pgfkeysgetvalue{/pgfplots/table/@cell content}\besttmp
					\edef\besttmp{{\noexpand\boldmath\besttmp}}
					\pgfkeyslet{/pgfplots/table/@cell content}\besttmp
					}{}
			}
		},
		columns/{Regular CG It Time}/.style={column name={CG}, fixed, fixed zerofill, precision=3, column type={r}},
		columns/{k=2 It Time}/.style={column name={$k=2$}, fixed, fixed zerofill, precision=3, column type={r}},
		columns/{k=4 It Time}/.style={column name={$k=4$}, fixed, fixed zerofill, precision=3, column type={r}},
		columns/{k=8 It Time}/.style={column name={$k=8$}, fixed, fixed zerofill, precision=3, column type={r}}
	]\datatable
\end{landscape}
 
\end{document}